\newtheorem{theorem}{Theorem}[section]
\newtheorem{corollary}[theorem]{Corollary}
\theoremstyle{remark}
\newtheorem{remark}[theorem]{Remark}
\theoremstyle{definition}
\newtheorem{definition}[theorem]{Definition}
\newtheorem{problem}[theorem]{Problem}
\begin{document}

\title{Exchangeable optimal transportation and log-concavity}

\author{Alexander~V.~Kolesnikov}
\address{ Higher School of Economics, Moscow,  Russia}
\email{Sascha77@mail.ru}

\vspace{5mm}
\author{Danila~A.~Zaev}
\address{ Higher School of Economics, Moscow,  Russia}
\email{zaev.da@gmail.com}

\thanks{ 
The first named author was supported by RFBR project 14-01-00237 and  the DFG project  CRC 701.
This study (research grant No 14-01-0056) was supported by The National Research University-Higher School of Economics' Academic Fund Program in 2014/2015.
}

\keywords{optimal transportation,  log-concave measures, exchangeable measures, de Finetti theorem, Caffarelli contraction theorem}

\begin{abstract}
We study the Monge and Kantorovich transportation problems on $\mathbb{R}^{\infty}$ within the class of exchangeable measures.
 With the help of the de Finetti decomposition theorem the problem is reduced
 to an unconstrained optimal transportation problem on the Hilbert space.
We find sufficient conditions for convergence of finite-dimensional approximations 
to the Monge solution.
The result holds, in particular, under certain analytical assumptions involving
log-concavity of the target measure. 
 As a by-product we obtain the following result: any uniformly log-concave
 exchangeable sequence of random variables is i.i.d.
\end{abstract}

\maketitle

\section{Introduction}

We consider  the Polish linear space $\mathbb{R}^{\infty}$ equipped
with the standard Borel sigma-algebra and two Borel exchangeable probability measures $\mu$ (source measure) and $\nu$ (target measure).
A Borel measure  is called exchangeable if it is invariant 
with respect to any permutation of  finite number of coordinates, i.e.
under any  linear operator $g$ satisfying
$$
g(x_1, x_2, \ldots, x_n, x_{n+1}, \ldots ) = (x_{\sigma(1)}, x_{\sigma(2)}, \ldots, x_{\sigma(n)}, x_{n+1}, \ldots ),
$$
for every point
$
x = (x_1, x_2, \ldots ) \in \mathbb{R}^{\infty}
$
and some $\sigma \in S_n$ (permutation group of $n$ elements).
We denote the union of all such $g$  by $\mathcal{S}_{\infty}$
and call it the infinite permutation group.

We say that a measure $\pi$ on $X \times Y$, where $X=Y=\mathbb{R}^{\infty}$,
is exchangeable if it is invariant with respect to any mapping
$$
(x,y) \mapsto (g(x), g(y)), \ g \in \mathcal{S}_{\infty}.
$$
Finally, a mapping $T \colon \mathbb{R}^{\infty} \mapsto \mathbb{R}^{\infty}$
is exchangeable, if $T \circ g = g \circ T$ for any $g \in \mathcal{S}_{\infty}$.

Throughout the paper we use  the following notations. We denote by $\mathcal{P}(X)
$ the  space of Borel probability measures on topological space $X$,
by $\mathcal{P}_{ex}(\mathbb{R}^{\infty})$ the space of exchangeable 
probability measures  on $\mathbb{R}^{\infty}$, and by
$
\mathcal{P}_2(\mathbb{R})
$ the space of Borel probability measures on $\mathbb{R}$ with finite second moments.
We use notation $W_2(P,Q)$ for the standard quadratic Kantorovich distance  between measures $P,Q$
on some metric space.

We are interested in the following transportation problems.
\begin{problem} \label{ekp} {\bf Exchangeable Kantorovich problem.}
Given $\mu, \nu \in \mathcal{P}_{ex}(\mathbb{R}^{\infty})$
find the minimum $K(\pi)$ of the functional
$$
\pi \mapsto  \int (x_1-y_1)^2 \ d\pi
$$
on the set $\mathcal{P}_{ex}(\mu,\nu)$ of exchangeable measures on $\mathbb{R}^{\infty} \times \mathbb{R}^{\infty}$
with marginals $\mu,\nu$.
\end{problem}

\begin{problem} \label{emp}{\bf Exchangeable Monge problem.}
Given $\mu, \nu \in \mathcal{P}_{ex}(\mathbb{R}^{\infty})$ find a Borel exchangeable mapping $T: \mathbb{R}^{\infty} \mapsto \mathbb{R}^{\infty}$ 
such that the measure
$$
\pi = \mu \circ (x, T(x))^{-1}
$$
is a solution to the exchangeable Kantorovich problem.

The mapping $T$ is called exchangeable optimal transportation.
\end{problem}

The motivation for the study of these problems comes from the fact 
that the similar problems on $\mathbb{R}^n$  are
equivalent to the standard Monge and Kantorovich problems with the same marginals and the cost
function 
$\sum_{i=1}^n (x_i - y_i)^2$ (see \cite{KZ}, \cite{Moameni}, \cite{Zaev1}).
Thus the problems (\ref{ekp}), (\ref{emp}) can be viewed as natural generalizations
of the standard Monge-Kantorovich problem to the case of infinite-dimensional
exchangeable marginals. Note that two different infinite-dimensional exchangeable marginals on $\mathbb{R}^{\infty}$
have infinite Kantorovich distance  if one defines it in the standard way (via minimization of $\int (x-y)^2_{l_2} d \pi$).  In contrast to this, the value of the corresponding minimum of the Kantorovich potential 
is a squared distance on the space $\mathcal{P}_{ex}(\mathbb{R}^{\infty})$.
More explanations and results  can be found in \cite{KZ}, \cite{Zaev2}. See also \cite{Vershik} for similar problems 
on graphs.

It will be assumed throughout that
\begin{equation}
\label{integr}
\int x_1^2 d \mu + \int y_1^2 d \nu < \infty.
\end{equation}
Since the cost function is continuous, the solvability of the Kantorovich problem can be shown by the standard compactness arguments.

The paper is organized as follows. In Section 2 we show that the exchangeable Monge
problem is equivalent to the classical Monge problem on a convex subset of the Hilbert space $l^2$.
This is shown with the help of the de Finetti-type (ergodic) decomposition for transportations plans. The reduction to $l^2$
makes possible to apply the standard machinery of the transportation theory (duality, convex analysis etc.)
to the existence problem. 

 In Section 3 we pursue a completely different approach, namely, we study when the  optimal transportation 
 is a limit of natural finite-dimensional approximations. We emphasize that this problem is far from being trivial.
 The affirmative answer is established under quite special assumptions on the marginals.
 Moreover, it implies an unexpected result on the structure of exchangeable measures with additional analytical properties. 
 To be precise: we approximate the marginals by their finite-dimensional projections $\mu_n, \nu_n$. We 
 prove that the solutions $T_n$ to the standard Monge problem for $\mu_n,\nu_n$ do converge 
 $\mu$-a.e. to the desired mapping $T$ provided $T_n$ are uniformly globally Lipschitz:
 $$
 \| T_n(x) - T_n(y)\| \le K \|x-y\|.
 $$
 These assumption can be verified for certain measures,
 in particular, in the following model situation:
 $\mu$ is the standard Gaussian  measure and $\nu$  is uniformly log-concave.
 Compare this to existence result of the Section 2 we get the following corollary: every exchangeable uniformly log-concave measure is a countable power of a 
one-dimensional distribution.

\section{Reduction to the Hilbert space}

Given a Borel probability measure $m$ on $\mathbb{R}$ we denote by $m^{\infty}$
its  countable power   (i.i.d. distributions with law $m$), which is a 
probability measure on $\mathbb{R}^{\infty}$.

According to a wide generalization of the classical de Finetti theorem (see \cite{Bo}, \cite{Kallenberg}) the exchangeable measures are precisely
the mixtures of the countable powers.

\begin{theorem} {\bf (generalized De Finetti theorem).}
For every Borel exchangeable measure $\mu$ on $\mathbb{R}^{\infty}$
there exists a Borel probability measure $\Pi$ on $\mathcal{P}(\mathcal{P}(\mathbb{R}))$ such that
$$
\mu(B) = \int m^{\infty}(B) \Pi(dm),
$$
for every Borel $B \subset \mathbb{R}^{\infty}$.
\end{theorem}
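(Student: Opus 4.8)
The plan is to realize $\mu$ as the law of an exchangeable sequence $X = (X_1, X_2, \dots)$ of real random variables on some probability space $(\Omega, \mathcal{F}, \mathbb{P})$ and to recover the mixing measure $\Pi$ as the distribution of the empirical ``directing'' random measure. Concretely, I would introduce the empirical measures
$$
L_n = \frac{1}{n} \sum_{i=1}^n \delta_{X_i} \in \mathcal{P}(\mathbb{R}),
$$
and first establish that $L_n$ converges weakly, $\mathbb{P}$-almost surely, to a random probability measure $L$ on $\mathbb{R}$. The natural tool is a reverse (backward) martingale argument: for a fixed bounded continuous test function $f$, the sequence $\int f\, dL_n = \frac{1}{n}\sum_{i\le n} f(X_i)$ equals, by exchangeability, the conditional expectation of $f(X_1)$ given the $\sigma$-algebra $\mathcal{E}_n$ of events invariant under permutations of the first $n$ coordinates, so it is a reverse martingale along the decreasing filtration $\mathcal{E}_n \downarrow \mathcal{E}$ and converges a.s.\ and in $L^1$. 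Running this over a countable convergence-determining family of $f$'s and discarding a common null set yields the weak limit $L$. The candidate mixing measure is then $\Pi := \mathbb{P} \circ L^{-1}$, a Borel probability measure on $\mathcal{P}(\mathbb{R})$ (equivalently, an element of $\mathcal{P}(\mathcal{P}(\mathbb{R}))$), and the measurability of $m \mapsto m^\infty(B)$ makes the asserted integral well defined.

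The heart of the matter, and the step I expect to be the main obstacle, is to show that conditionally on the exchangeable $\sigma$-algebra $\mathcal{E}$ (equivalently, conditionally on $L$) the coordinates $X_1, X_2, \dots$ are i.i.d.\ with common law $L$. The cleanest route is to prove, for all $k$ and all bounded continuous $f_1, \dots, f_k$, the factorization
$$
\mathbb{E}\Big[\prod_{i=1}^k f_i(X_i) \,\Big|\, \mathcal{E}\Big] = \prod_{i=1}^k \int f_i \, dL \qquad \mathbb{P}\text{-a.s.}
$$
This is obtained by a symmetrization argument: exchangeability identifies the conditional expectation given $\mathcal{E}_n$ with the $U$-statistic $\tfrac{1}{(n)_k}\sum \prod_{i} f_i(X_{j_i})$ over distinct indices $j_1,\dots,j_k \le n$, and as $n \to \infty$ this average splits into the product of the individual empirical integrals $\int f_i\, dL_n$, the ``diagonal'' contributions from colliding indices being $O(1/n)$. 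Passing to the limit and using $\int f_i\, dL_n \to \int f_i\, dL$ gives the identity, with both sides $\mathcal{E}$-measurable.

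Granting the conditional i.i.d.\ structure, the representation follows by taking expectations: for a Borel cylinder set $B$ one has $\mathbb{P}(X \in B \mid \mathcal{E}) = L^\infty(B)$ $\mathbb{P}$-a.s., whence
$$
\mu(B) = \mathbb{P}(X \in B) = \mathbb{E}\big[L^\infty(B)\big] = \int_{\mathcal{P}(\mathbb{R})} m^\infty(B)\, \Pi(dm),
$$
and a monotone-class argument extends this from cylinders to all Borel $B \subset \mathbb{R}^\infty$. An alternative, more structural proof would invoke the ergodic decomposition theorem for the action of $\mathcal{S}_\infty$ on $\mathbb{R}^\infty$: the exchangeable measures are exactly the invariant measures of this action, the Hewitt--Savage zero-one law identifies the ergodic ones precisely as the product measures $m^\infty$, and the abstract decomposition of an invariant measure into its ergodic components is exactly the asserted integral representation, with $\Pi$ the decomposition measure. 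I would present the empirical-measure argument as the main line, since it produces $\Pi$ explicitly, and mention the ergodic-theoretic viewpoint only for context.
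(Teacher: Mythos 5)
The paper offers no proof of this statement at all: it is quoted as a known result, with the reader referred to Bogachev and Kallenberg, so there is no in-paper argument to compare against. Your proposal is the canonical proof from precisely that cited literature --- Kallenberg's reverse-martingale/empirical-measure argument --- and it is correct in outline: the identification of $\frac1n\sum_{i\le n} f(X_i)$ with $\mathbb{E}[f(X_1)\mid\mathcal{E}_n]$, the $U$-statistic symmetrization with $O(1/n)$ diagonal error giving the conditional i.i.d.\ factorization, and the monotone-class extension from cylinders are all the right steps. One detail you gloss over: almost sure existence of the weak limit $L$ does not follow merely from convergence of $\int f\,dL_n$ along a countable family of test functions, since one must also rule out escape of mass (the limiting functional must be represented by a \emph{probability} measure). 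The clean fix, standard in this setting, is to take $L$ to be a regular conditional distribution of $X_1$ given the exchangeable $\sigma$-algebra $\mathcal{E}$ --- which exists because $\mathbb{R}$ is Polish and is automatically a random probability measure --- and then observe that your reverse-martingale limits identify $\int f\,dL_n\to\int f\,dL$ a.s.\ for the countable convergence-determining class, yielding $L_n\to L$ weakly a.s. You also correctly reconcile the paper's slightly garbled phrasing: the statement says $\Pi$ is a measure ``on $\mathcal{P}(\mathcal{P}(\mathbb{R}))$'' while the integral runs over $m\in\mathcal{P}(\mathbb{R})$; your reading ($\Pi=\mathbb{P}\circ L^{-1}$ as an element of $\mathcal{P}(\mathcal{P}(\mathbb{R}))$, i.e.\ a measure on $\mathcal{P}(\mathbb{R})$) is the intended one. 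Your closing remark on the ergodic-decomposition alternative is apt for this paper, since the Hewitt--Savage law and ergodic decompositions (via \cite{Zaev2}) are exactly the tools used elsewhere in the text; but note that identifying the ergodic exchangeable measures as the product measures requires essentially the same symmetrization lemma, so it is not a shortcut around the main estimate.
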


Next, given the de Finetti decomposition
of two marginals we apply the following decomposition theorem, which is a particular case of a result 
 from \cite{Zaev2} on ergodic decompositions of optimal transportation plans.

\begin{theorem}
\label{plandecom}{\bf \cite{Zaev2}.}
Assume we are given the 
de Finetti  decompositions
\begin{equation}
\label{munudecomp}
\mu  = \int_{X}  \mu^{\infty}_x \ d {\sigma_{\mu}}, \  \nu  = \int_{Y}  \nu^{\infty}_y \ d {\sigma_{\nu}}
\end{equation}
of the measures $\mu,\nu$, where $X=Y = \mathcal{P}(\mathbb{R})$
 and, similarly, the ergodic decomposition  of $\pi$:
\begin{equation}
\label{pidecomp}
\pi = \int_{\mathcal{P}(\mathbb{R}^2) } \pi_{x,y} d \delta.
\end{equation}
Then for  $\delta$-almost all $(x,y)$ the measure $\pi_{x,y}$ solves  the one-dimensional quadratic Kantorovich problem with  marginals
$\mu_x, \nu_y$:
$$
\int (t-s)^2 d \pi_{x,y}(t,s) = W^2 _2(\mu_x,\nu_y) =  \min_{\theta \in \mathcal{P}(\mu_x,\nu_y)} \int (t-s)^2 d \theta(t,s)
$$
and the following representation formula holds:
$$
 \min_{\pi \in \Pi_{ex}(\mu,\nu)} \int (x_1-y_1)^2 \ d\pi  = \inf_{\delta \in \Pi(\sigma_{\mu}, \sigma_{\nu})}
\int  
W^2_2(\mu^{x}, \nu^{y} ) \ d \delta. 
$$
\end{theorem}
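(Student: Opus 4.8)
The plan is to exploit the identification $\mathbb{R}^{\infty} \times \mathbb{R}^{\infty} \cong (\mathbb{R}^2)^{\infty}$, under which the diagonal action $(x,y) \mapsto (g(x), g(y))$ of $\mathcal{S}_{\infty}$ becomes the ordinary coordinate-permutation action on $(\mathbb{R}^2)^{\infty}$. Under this identification an exchangeable plan $\pi$ is simply an exchangeable measure on $(\mathbb{R}^2)^{\infty}$, so the generalized de Finetti theorem applies to $\pi$ itself: its ergodic components are the countable powers $\theta^{\infty}$ with $\theta \in \mathcal{P}(\mathbb{R}^2)$, and the mixing measure is exactly the $\delta$ of (\ref{pidecomp}), now viewed as a Borel probability measure on $\mathcal{P}(\mathbb{R}^2)$ with $\pi_{x,y} = \theta$.

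First I would pin down the constraints that the marginal condition $\pi \in \Pi_{ex}(\mu,\nu)$ imposes on $\delta$. Writing $p \colon \mathcal{P}(\mathbb{R}^2) \to \mathcal{P}(\mathbb{R}) \times \mathcal{P}(\mathbb{R})$ for the map sending $\theta$ to its pair of one-dimensional marginals $(\theta_1,\theta_2)$, the first marginal of $\theta^{\infty}$ is $(\theta_1)^{\infty}$, so projecting (\ref{pidecomp}) onto the $x$-coordinates gives $\mu = \int (\theta_1)^{\infty}\, d\delta$. Comparing this with the de Finetti decomposition (\ref{munudecomp}) of $\mu$ and invoking uniqueness of the mixing measure, the image of $\delta$ under $\theta \mapsto \theta_1$ must coincide with $\sigma_{\mu}$, and symmetrically for $\theta \mapsto \theta_2$ and $\sigma_{\nu}$. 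Hence $p_{*}\delta \in \Pi(\sigma_{\mu},\sigma_{\nu})$. Conversely, any $\delta$ on $\mathcal{P}(\mathbb{R}^2)$ with $p_{*}\delta \in \Pi(\sigma_{\mu},\sigma_{\nu})$ produces, via $\pi = \int \theta^{\infty}\, d\delta$, an element of $\Pi_{ex}(\mu,\nu)$. This sets up a correspondence between $\Pi_{ex}(\mu,\nu)$ and the measures $\delta$ with prescribed marginal coupling.

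Next I would rewrite the cost. Because only the first pair of coordinates enters and $\theta^{\infty}$ is a product, $\int (x_1-y_1)^2\, d\theta^{\infty} = \int (t-s)^2\, d\theta$, so the decomposition yields
\[
\int (x_1-y_1)^2\, d\pi = \int_{\mathcal{P}(\mathbb{R}^2)} \Big( \int (t-s)^2\, d\theta(t,s) \Big)\, d\delta(\theta).
\]
Minimizing over $\pi$ thus becomes minimizing $\int c(\theta)\, d\delta$, with $c(\theta) := \int (t-s)^2\, d\theta$, over all $\delta$ with $p_{*}\delta \in \Pi(\sigma_{\mu},\sigma_{\nu})$. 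I would organize this as a two-level optimization: disintegrate $\delta$ along $p$ over a fixed marginal coupling $\bar\delta = p_{*}\delta$; the inner problem of minimizing $c(\theta)$ over $\theta$ with prescribed marginals $(\mu_x,\nu_y)$ is precisely the one-dimensional quadratic Kantorovich problem, with value $W^2_2(\mu_x,\nu_y)$. Pushing the infimum inside gives
\[
\min_{\pi \in \Pi_{ex}(\mu,\nu)} \int (x_1-y_1)^2\, d\pi = \inf_{\bar\delta \in \Pi(\sigma_{\mu},\sigma_{\nu})} \int W^2_2(\mu_x,\nu_y)\, d\bar\delta,
\]
which is the asserted representation formula. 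Optimality of the components then follows: if $\pi$ attains the minimum, then $\delta$-almost every $\theta = \pi_{x,y}$ must realize the inner infimum, i.e.\ be an optimal one-dimensional coupling of its marginals, for otherwise replacing it on a set of positive $\delta$-measure by an optimal coupling would strictly lower the total cost.

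The main obstacle I anticipate is the measurability needed to make the two-level optimization rigorous: to push the infimum inside the integral and to reassemble an optimal $\delta$ from optimal one-dimensional couplings, one needs $(\mu_x,\nu_y) \mapsto W^2_2(\mu_x,\nu_y)$ to be Borel and a measurable selection $(\mu_x,\nu_y) \mapsto \theta$ of optimal couplings to exist. In dimension one the optimal coupling is the monotone quantile coupling and is unique, which should make both the measurability of the value and the selection tractable; but verifying this carefully, together with the legitimacy of disintegrating $\delta$ and recombining the components into an exchangeable $\pi$, is the technical heart, and is exactly the content of the general ergodic-decomposition theorem of \cite{Zaev2} that is here specialized.
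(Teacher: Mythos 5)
Your proposal is correct, but it cannot be compared line-by-line with the paper's argument for a simple reason: the paper gives no proof of this theorem at all, importing it from \cite{Zaev2} as a special case of a general theorem on ergodic decompositions of optimal plans for Kantorovich problems with invariance constraints. Your route is the natural direct proof of exactly this special case. The key observation --- that under the interleaving identification $\mathbb{R}^{\infty}\times\mathbb{R}^{\infty}\cong(\mathbb{R}^2)^{\infty}$ the diagonal $\mathcal{S}_{\infty}$-action becomes the coordinate-permutation action, so that $\pi$ is itself an exchangeable measure and de Finetti applies to it --- is sound, provided you note that the de Finetti theorem quoted in the paper for $\mathbb{R}^{\infty}$ holds verbatim for $E^{\infty}$ with $E$ any standard Borel space (here $E=\mathbb{R}^2$), e.g.\ via the Borel isomorphism $\mathbb{R}^2\cong\mathbb{R}$ or Hewitt--Savage in Kallenberg's formulation. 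Your use of uniqueness of the mixing measure to turn the marginal constraints on $\pi$ into the constraint $p_{*}\delta\in\Pi(\sigma_{\mu},\sigma_{\nu})$, and the two-level optimization, are exactly right. The two technical points you flag close easily in this one-dimensional situation: $(m_1,m_2)\mapsto W^2_2(m_1,m_2)$ is Borel (indeed lower semicontinuous for weak convergence), and no abstract measurable-selection theorem is needed because the optimal coupling is given explicitly by the quantile map $(m_1,m_2)\mapsto(F^{-1}_{m_1},F^{-1}_{m_2})_{*}\lambda$, with $\lambda$ Lebesgue measure on $[0,1]$, whose Borel dependence on $(m_1,m_2)$ follows from measurability of $m\mapsto F^{-1}_m(t)$ for each fixed $t$. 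Two points to make explicit in a write-up: the almost-everywhere optimality of the components $\pi_{x,y}$ presupposes that $\pi$ solves the exchangeable Kantorovich problem (implicit in the theorem's statement), and your strict-improvement replacement argument needs the total cost to be finite, which is guaranteed by (\ref{integr}). Compared with \cite{Zaev2}, your argument buys an elementary, self-contained proof tailored to the quadratic exchangeable case, whereas the cited theorem is proved for general costs and general countable symmetry groups; the paper needs only the specialization you have reconstructed.
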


It  follows immediately from Theorem \ref{plandecom} that the Monge problem
can be similarly decomposed in two Monge problems:
\begin{itemize}
\item [1)] Monge problem for measures
$\sigma_{\mu}, \sigma_{\nu}$
and the cost function $(\mu,\nu) \mapsto W^2_2(\mu,\nu)$
on $\mathcal{P}(\mathbb{R})$.

\item [2)] One-dimensional Monge problem for measures $\mu_x, \nu_y$ and the quadratic cost function.
\end{itemize}
The following conclusion is straightforward.

\begin{corollary}
\label{not-exists}
The exchangeable Monge problem admits solution if and only if problem 1) is solvable and, moreover, 
problem 2) is solvable for $\sigma_{\mu}$-almost all $\mu^x$ and $\sigma_{\nu}$-almost all $\nu^y$.

The exchangeable Monge problem is not always solvable. For instance, if $\mu$ is a countable power, but $\nu$ is not,
then there is no optimal transportation pushing forward  $\mu$ onto $\nu$.
\end{corollary}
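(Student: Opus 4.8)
My plan is to read the Corollary as a dictionary between exchangeable Monge maps on $\mathbb{R}^\infty$ and pairs consisting of an outer map on $\mathcal{P}(\mathbb{R})$ (problem 1)) together with a measurable family of one-dimensional maps (problem 2)), to establish both implications using Theorem \ref{plandecom}, and then to deduce the example from the nontrivial implication applied to a degenerate de Finetti parameter. For the implication ``problems 1) and 2) solvable $\Rightarrow$ exchangeable Monge solvable'' I would start from an optimal Borel map $S\colon \mathcal{P}(\mathbb{R})\to\mathcal{P}(\mathbb{R})$ with $S_\#\sigma_\mu=\sigma_\nu$ and, for $\sigma_\mu$-a.e.\ $x$, the monotone one-dimensional map $T_x$ carrying $\mu^x$ onto $\nu^{S(x)}$. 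These assemble into a single map acting coordinatewise: from a $\mu$-typical $\omega=(\omega_1,\omega_2,\dots)$ one recovers the de Finetti parameter $x=x(\omega)$ as the limit of empirical distributions and sets $T(\omega)=(T_x(\omega_1),T_x(\omega_2),\dots)$. Applying the \emph{same} $T_x$ in every coordinate makes $T$ commute with each $g\in\mathcal{S}_{\infty}$, hence exchangeable; conditioning on $x$ gives $T_\#\mu^\infty_x=\nu^\infty_{S(x)}$, so $T_\#\mu=\int \nu^\infty_{S(x)}\,d\sigma_\mu=\nu$. Optimality of the induced plan is immediate from the representation formula in Theorem \ref{plandecom}, since its cost equals $\int W_2^2(\mu^x,\nu^{S(x)})\,d\sigma_\mu$, which is the infimum by optimality of $S$. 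The only routine point is the measurable dependence $x\mapsto T_x$, which holds because the monotone rearrangement depends measurably on its marginals.

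The reverse implication is where the main work sits. Given an exchangeable solution $T$, put $\pi=\mu\circ(\mathrm{id},T)^{-1}$; this is an optimal exchangeable plan carried by the graph of $T$. I would first extract the outer map: because $T$ is equivariant and each $\mu^\infty_x$ is ergodic for the $\mathcal{S}_{\infty}$-action, the pushforward $T_\#\mu^\infty_x$ is again ergodic (an $\mathcal{S}_{\infty}$-invariant set in the target pulls back through equivariance to an $\mathcal{S}_{\infty}$-invariant set of equal $\mu^\infty_x$-measure), hence by de Finetti is a countable power $\nu^\infty_y$; writing $y=S(x)$ defines a Borel map solving problem 1), and the coupling $\delta\in\Pi(\sigma_\mu,\sigma_\nu)$ furnished by Theorem \ref{plandecom} is concentrated on its graph. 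Since $\pi(\mathrm{graph}\,T)=1$ forces $\pi_{x,y}(\mathrm{graph}\,T)=1$ for $\delta$-a.e.\ $(x,y)$, each component is a graph-concentrated countable power; combined with the fact that, by Theorem \ref{plandecom}, its one-dimensional factor is the optimal monotone coupling of $\mu^x$ and $\nu^{S(x)}$, this identifies the factor as the graph of a one-dimensional map, which is exactly solvability of problem 2). I expect the preservation of ergodicity under the equivariant $T$ to be the crux, as it is precisely what forces the outer coupling $\delta$ to be deterministic rather than genuinely random; the accompanying Borel measurability of $S$ is a standard measurable-selection issue.

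Finally, for the non-solvability example I would invoke the reverse implication in contrapositive form. If $\mu=m^\infty$ is a countable power, then its de Finetti measure is the Dirac mass $\sigma_\mu=\delta_m$ on $\mathcal{P}(\mathbb{R})$, whereas $\nu$ failing to be a countable power means that $\sigma_\nu$ is not a Dirac mass. Any Borel map pushing $\delta_m$ forward yields the Dirac mass $\delta_{S(m)}$, which cannot equal the non-Dirac $\sigma_\nu$; hence problem 1) is unsolvable, and by the equivalence just established no exchangeable optimal transportation exists.
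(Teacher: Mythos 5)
Your proposal is correct, and its ``only if'' direction takes a genuinely different (though ultimately equivalent) route from the paper's. The paper argues directly at the level of the map: by the Hewitt--Savage $0$--$1$ law, every exchangeable function is a.s.\ constant with respect to each ergodic component $\mu^{\infty}_x$, whence any exchangeable $T$ is diagonal a.e., $T(x)=(t(x_1),t(x_2),\ldots)$, and the induced map $\mathcal{T}(\mu_x)=\mu_x\circ t^{-1}$ solves problem 1), with optimality read off from Theorem \ref{plandecom}. You instead argue at the level of the plan: you push each ergodic component through the equivariant $T$, note that equivariance preserves ergodicity, identify the image as a countable power $\nu^{\infty}_{S(x)}$ by de Finetti, and then extract the one-dimensional maps from graph-concentration of the ergodic components $\pi_{x,y}$. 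These are two faces of the same fact (Hewitt--Savage ergodicity of i.i.d.\ laws), but your version makes the outer map $S$ and the coupling $\delta$ explicit, at the cost of being longer; the paper's version gets the diagonality in one stroke. Your explicit assembly in the ``if'' direction (recovering the de Finetti parameter as the limit of empirical distributions, applying the \emph{same} monotone $T_x$ in every coordinate, and computing the cost as $\int W_2^2(\mu^x,\nu^{S(x)})\,d\sigma_\mu$) is the natural fleshing-out of what the paper dismisses as immediate, and your treatment of the counterexample --- a map must send the Dirac de Finetti measure $\delta_m$ to a Dirac measure, which the non-power $\nu$ forbids --- is exactly the intended argument.

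One step you assert deserves the few lines it needs, because it is precisely where the paper's $0$--$1$ law does its work: that a graph-concentrated countable power $\pi_{x,y}=\theta^{\infty}$ has a graph factor $\theta$. Optimality of $\theta$ alone does not yield this (a one-dimensional optimal coupling need not be a graph when the source has atoms), so the conclusion must come from the concentration itself. The argument: with $(u_i,v_i)$ i.i.d.\ $\sim\theta$ and $v_1$ a.s.\ a measurable function of the whole sequence $(u_1,u_2,\ldots)$, independence of $(u_1,v_1)$ from $(u_j,v_j)_{j\ge 2}$ gives that $v_1$ is conditionally independent of $(u_2,u_3,\ldots)$ given $u_1$; a function of $(u_2,u_3,\ldots)$ that is independent of it is a.s.\ constant, so $v_1=t(u_1)$ a.s., and $\theta$ is the graph of $t$. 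Relatedly, you locate the crux in the preservation of ergodicity, but that part is routine (invariant sets pull back through an equivariant map to invariant sets); the genuine crux is this diagonality step, which in the paper is exactly the appeal to Hewitt--Savage. With that short argument supplied, your proof is complete.
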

\begin{proof}
All the statement are immediate except of the ''only if'' part.
We have to show that every exchangeable optimal transportation $T$
induces an optimal transportation mapping $\mathcal{T} \colon \mathcal{P}(\mathbb{R}) \mapsto  \mathcal{P}(\mathbb{R})$.
This follows easily from the fact that every exchangeable mapping $T$ is diagonal
almost everywhere (i.e. has the form $T(x) = (t(x_1), \ldots, t(x_n), \ldots)$ for some $t \colon \mathbb{R} \mapsto \mathbb{R}$)
with respect to any countable power $\mu^{\infty}_x$.
This is an immediate consequence of the fact that every exchangeable function $f$ is constant for $\mu^{\infty}_x$-almost all points
by the Hewitt-Savage $0-1$ law. Thus induced mapping can be defined as follows: $\mathcal{T}(\mu_x) = \mu_x \circ t^{-1}$.
The optimality of the latter mapping follows from Theorem \ref{plandecom}.
\end{proof}

Since the one-dimensional Monge problem admits a precise solution under appropriate easy-to-check sufficient conditions, the exchangeable Monge problem
is reduced to the Monge problem on the metric space
$$
(\mathcal{P}_2(\mathbb{R}), W_2(\mathbb{R}))
$$
with the cost function $W^2_2$.

Remarkably, the problem can be further reduced to 
a problem on a {\bf linear} space.
This  can be made with the help of the well know fact
that $(\mathcal{P}_2, W_2)$ is isomorphic to  a convex subset of $L_2([0,1])$.
The distance preserving isomorphism $$\mathcal{I} \colon \mathcal{P}_2 \mapsto L^2([0,1])$$ has the form
$$
\mathcal{I}(\mu) = F^{-1}_{\mu},
 $$ 
 where $F^{-1}_{\mu}$ is the inverse distribution function of $\mu$.  In case when the distribution function $F_{\mu}$ is 
 not  one-to-one we simply define
$$
F^{-1}_{\mu}(t) = \inf \{ s \colon \mu(-\infty,s] > t\}.
$$
Thus the set 
$$\mathcal{K} = \mathcal{I}(\mathcal{P}_2(\mathbb{R}))
$$ 
consists 
of non-decreasing right continuous mappings which belong to $L^2[0,1]$. 

After all we conclude that the exchangeable Kantorovich and Monge problems
are reduced to the same problems on the subset $\mathcal{K}$ of $l^2$ equipped with the standard 
$l_2$-metric.

The existence of optimal transportation mappings on the Hilbert space is known under assumptions given below. 
It was obtained in  \cite{CM} and can be constructed with the help of  by now standard arguments. Indeed, one 
can consider the solution $(\varphi, \psi)$ to the dual Kantorovich problem
\begin{equation}
\label{dualK}
 \int \varphi d \mu + \int \psi d \nu \to \sup, \ \varphi(x) + \psi(y) \le |x-y|^2_{l^2}.
\end{equation}
It follows from the general results on 
the dual Kantorovich problem that for every solution $\pi$ to the primal Kantorovich problem
there exists a solution $(\varphi,\psi)$ to (\ref{dualK}) such that
$$
\varphi(x) + \psi(y) \le  |x-y|^2_{l^2}
$$
and $\varphi(x) + \psi(y) = |x-y|^2_{l^2}$ $\pi$-a.e. From these relations we infer that for $\pi$-a.e. 
points $(x_0,y_0)$ one has $y_0 \in \partial \varphi(x_0)$, where $\varphi(x_0)$ is the  superdifferential
of $\varphi$ at $x_0$. To construct the corresponding optimal transportation (and prove uniqueness of solutions to
all the associated optimal transportation problems) 
 it is sufficient to 
ensure that $\partial \varphi(x_0)$ contains a unique element $\mu$-a.e. It was verified
\cite{CM} under assumption of regularity of $\mu$.

\begin{definition} \label{reg}
Assume that we are given a sequence of vectors $\{e_i\}$ such that the closure of $\mbox{\rm}{span}(\{e_i\})$ contains the topological support of $\mu$.
Disintegrate $\mu$  with respect to  $e_i$:
$$
\mu = \int_{X^{\bot}_i} \mu^{x} d \mu_i, \  \ \ \mu_i = \mu \circ Pr_i^{-1},
$$
where  $Pr_i$ is the orthogonal  projection onto $X^{\bot}_i = \{x \colon x \bot e_i\}$
and $\{\mu^x\}$ is the corresponding family of conditional measures.

The measure $\mu$ on $l^2$ is called regular if for
$\mu_i$-almost every  $x$ the conditional measure $\mu^x$ is atomless.
\end{definition}

In sum, the following result holds.
\begin{theorem} \cite{CM}.
Let $\mu, \nu$ be Borel probability measures on 
$$(\mathcal{P}(\mathbb{R}), W^2_2(\mathbb{R})) \sim (\mathcal{K}, \| \cdot\|_{l^2}) \subset l^2.$$ Assume that 
$$
\int |x|^2_{l_2} d \mu + \int |y|^2_{l_2} d \nu < \infty
$$
and the source measure $\mu$ 
is regular in the sense of Definition \ref{reg}. Then there exists the unique solutions $\pi$,  $(\varphi,\psi)$ 
to the primal and the  dual Kantorovich problems and the unique solution to the Monge problem, which has the form
$$
T(x) = x - \partial \varphi(x).
$$
\end{theorem}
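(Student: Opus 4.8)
The plan is to run the Brenier--McCann argument for the quadratic cost on the Hilbert space $l^2$, the only nonstandard point being the use of Definition~\ref{reg} to recover almost everywhere differentiability of the Kantorovich potential, which in infinite dimensions does not come for free.

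First I would linearise the cost. Since
$$
|x-y|^2_{l^2} = |x|^2_{l^2} - 2\langle x,y\rangle + |y|^2_{l^2},
$$
and the two quadratic terms integrate against any coupling with marginals $\mu,\nu$ to constants fixed by the marginals (finite by the moment hypothesis), minimising $\int |x-y|^2_{l^2}\,d\pi$ is equivalent to maximising the correlation $\int \langle x,y\rangle\,d\pi$. Existence of a minimiser $\pi$ is the direct method already invoked in the text: the set of couplings is tight, hence weakly compact by Prokhorov, and the cost is lower semicontinuous and bounded below.

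Next I would apply Kantorovich duality on the Polish space $l^2$ to obtain a maximising pair $(\varphi,\psi)$ for (\ref{dualK}) with no duality gap. After replacing $\varphi$ by its double $c$-transform I may take the pair $c$-conjugate, so that the function
$$
u(x) = \tfrac12\bigl(|x|^2_{l^2} - \varphi(x)\bigr)
$$
is proper, lower semicontinuous and convex on $l^2$, with $\psi$ corresponding to its convex conjugate. The complementary slackness $\varphi(x)+\psi(y)=|x-y|^2_{l^2}$, valid $\pi$-a.e., becomes the Fenchel equality $y\in\partial u(x)$, so the optimal plan is concentrated on the graph of the subdifferential of $u$; up to the normalisation used in the statement this is precisely the relation $y\in\partial\varphi(x)$ and the candidate map $T(x)=x-\partial\varphi(x)$.

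The decisive step, and the one I expect to be the main obstacle, is to prove that $\partial u(x)$ reduces to a single point for $\mu$-a.e. $x$; granting this, $T$ is a well-defined Borel map, every optimal $\pi$ must sit on its graph, and this at once yields uniqueness of $\pi$, uniqueness of the Monge solution, and uniqueness of $(\varphi,\psi)$ up to additive constants. This single-valuedness is exactly what Definition~\ref{reg} is designed to furnish. Fixing a spanning sequence $\{e_i\}$ whose closed span carries the support of $\mu$, I would note that along each line $t\mapsto x+te_i$ the restriction of $u$ is a one-dimensional convex function, hence differentiable off an at most countable set; disintegrating $\mu$ in the direction $e_i$ as in Definition~\ref{reg} and using that the conditional measures $\mu^x$ are atomless, a Fubini argument shows that the directional derivative $\partial_{e_i}u$ exists $\mu$-a.e., for every $i$ simultaneously. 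Since for a convex function any $p\in\partial u(x)$ satisfies $\langle p,e_i\rangle=\partial_{e_i}u(x)$ whenever the latter exists, and the $\{e_i\}$ span a dense subspace containing $\operatorname{supp}\mu$, these directional derivatives pin down a unique element of $\partial u(x)$ for $\mu$-a.e. $x$, which is the required Gâteaux differentiability. Verifying measurability of $x\mapsto\partial u(x)$ and that $T_\#\mu=\nu$ are then the routine points that close the argument.
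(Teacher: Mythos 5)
Your proposal follows essentially the same route as the paper, which itself only sketches the argument and defers to \cite{CM}: Kantorovich duality, concentration of any optimal plan on the graph of the subdifferential of the convex potential $u(x)=\tfrac12\bigl(|x|^2_{l^2}-\varphi(x)\bigr)$, and $\mu$-a.e.\ single-valuedness of $\partial u$ obtained from Definition~\ref{reg} by restricting $u$ to lines parallel to each $e_i$ and using atomlessness of the conditional measures. The one point worth stating carefully is in your last step: the directional derivatives $\partial_{e_i}u$ only pin down the component of a subgradient lying in $\overline{\mathrm{span}}\{e_i\}$, so single-valuedness of $\partial u(x)$ in all of $l^2$ requires that span to be dense (not merely to contain $\mathrm{supp}\,\mu$) --- a caveat inherited from the paper's own formulation rather than a defect of your argument.
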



\section{Finite-dimensional approximations and log-concavity}

In this section we pursue completely different  approach to the existence for the Monge problem.
We construct the optimal mapping as a limit of finite-dimensional approximations.
In should be emphasized that it is usually hard to capture 
the decomposition structure given by the de Finetti theorem if the exchangeable measure is given as a limit of finite-dimensional 
approximations. This is the reason why the main result of this section  looks completely unrelated to the de Finetti decomposition
and abstract sufficient conditions obtained in the previous section.

The projection  $P_n \colon \mathbb{R}^{\infty} \mapsto \mathbb{R}^{\infty}$ onto the first $n$ coordinates will be denoted by $P_n $:
 $$P_n (x) = (x_1, x_2, \cdots, x_n ,0, 0, \cdots).$$
  Let us consider the projections $\mu_n = \mu \circ P^{-1}_n, 
\nu_n = \nu \circ P^{-1}_n$ of the marginals.

Clearly, the measures  $\mu_n,\nu_n$  are exchangeable as well (considered as measures on $\mathbb{R}^n$, i.e. invariant with respect to any permutation of the first $n$ coordinates). Let 
$\pi_n$ be the solution to
 the corresponding finite-dimensional exchangeable Monge-Kantorovich problem,
i.e. 
$$
\int (x_1 - y_1)^2 \ d m \to \inf
$$
where the infimum is taken among all of $2n$-dimensional exchangeable  measures with marginals $\mu_n,\nu_n$.
Equivalently, one can solve the standard Monge-Kantorovich problem with the cost function $\sum_{i=1}^{n} (x_i - y_i)^2$ instead.

Let $$T_n(x) = \nabla \Phi_n(x)$$
 be the corresponding optimal transportation mapping.

{\bf Assumption A.}
There exists $K>0$ such that the  potentials $\Phi_n$ do satisfy
$$
\Phi_n(a) - \Phi_n(b) - \langle \nabla \Phi_n(b), a-b \rangle \le  K \|a-b\|^2
$$
for all $n$, $a,b \in \mathbb{R}^n$.

Equivalently, the dual potentials $\Psi_n$ satisfy
$$
\Psi_n(a) - \Psi_n(b) - \langle \nabla \Psi_n(b), a-b \rangle \ge \frac{\|a-b\|^2}{K}.
$$

\begin{remark}
Clearly, assumption {\bf A} is equivalent to the requirement that every optimal mapping
$\mathbb{R}^n \ni x \mapsto \nabla \Phi_n(x)$ is $K$-Lipschitz: $$|\nabla \Phi_n(x) - \nabla \Phi_n(y)| \le K |x-y|$$
on $\mathbb{R}^n.$
\end{remark}

\begin{theorem} 
\label{permutation-exist}
Under assumptions $\bf{A}$  and (\ref{integr}) there exists a solution $\pi$ to  the problem (\ref{emp}).
\end{theorem}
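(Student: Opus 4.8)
The statement asks for existence of a solution to the exchangeable Monge problem (\ref{emp}): an exchangeable Borel map $T\colon\mathbb{R}^{\infty}\to\mathbb{R}^{\infty}$ whose coupling $\pi=\mu\circ(\mathrm{id},T)^{-1}$ minimizes $\int(x_1-y_1)^2\,d\pi$ over the exchangeable plans with marginals $\mu,\nu$. The plan is to produce $T$ as an almost everywhere limit of the finite-dimensional optimal maps $T_n=\nabla\Phi_n$, and then to verify in turn that $T$ is exchangeable, that $T_{\#}\mu=\nu$, and that the induced plan is optimal for (\ref{ekp}). By the Remark following Assumption {\bf A}, hypothesis {\bf A} is precisely the statement that all the maps $T_n$ are $K$-Lipschitz, and this uniform bound is the engine of the whole argument.

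First I would set up the compactness needed to extract the limit. Regard each $T_n$ as a map $\mathbb{R}^{\infty}\to\mathbb{R}^{\infty}$ via $x\mapsto T_n(P_n x)$, so that $(T_n)_{\#}\mu_n=\nu_n$. Exchangeability of $\mu,\nu$ forces every coordinate to be identically distributed, so (\ref{integr}) yields a bound on $\int|T_n^{(i)}|^2\,d\mu_n$ uniform in both $n$ and $i$; together with the $K$-Lipschitz bound this gives tightness of the images and equicontinuity of the maps on each finite block of coordinates. Applying Arzel\`a--Ascoli on an exhausting sequence of compact sets, block by block, and extracting a diagonal subsequence, I obtain (along a subsequence) a $K$-Lipschitz map $T$ with $T_n\to T$ locally uniformly, hence $\mu$-a.e. and, by the uniform second moments, in $L^2(\mu)$.

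It then remains to check that $T$ solves (\ref{emp}). Exchangeability of $T$ is inherited from the $S_n$-equivariance of each $T_n$ (a consequence of the uniqueness of the Brenier map together with the exchangeability of $\mu_n,\nu_n$), since equivariance passes to the a.e. limit. The marginal identity $T_{\#}\mu=\nu$ follows by passing to the limit in $(T_n)_{\#}\mu_n=\nu_n$, using $\mu_n\to\mu$, $\nu_n\to\nu$ and the $L^2$-convergence of $T_n$. For optimality, set $\pi_n=(\mathrm{id},T_n)_{\#}\mu_n$ and $\pi=(\mathrm{id},T)_{\#}\mu=\lim\pi_n$, where each $\pi_n$ is optimal for $\mu_n,\nu_n$. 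Denoting by $I_{\infty}$ the infimum in (\ref{ekp}) and by $I_n$ the value of $\pi_n$, the projection $(P_n,P_n)_{\#}\gamma$ of any feasible exchangeable plan $\gamma$ leaves $(x_1-y_1)^2$ unchanged, so $I_n\le I_{\infty}$ for every $n$. The limit $\pi$ is exchangeable with marginals $\mu,\nu$, and lower semicontinuity of the nonnegative cost gives $\int(x_1-y_1)^2\,d\pi\le\liminf I_n\le I_{\infty}$; feasibility of $\pi$ gives the reverse inequality, so $\pi$ is optimal. Thus $T$ is an exchangeable optimal transportation and $\pi$ solves (\ref{emp}). Alternatively, optimality of $\pi$ can be read off directly from the decomposition of Theorem \ref{plandecom}.

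The main obstacle is the construction of a genuine limiting \emph{map} on the infinite-dimensional space $\mathbb{R}^{\infty}$. The maps $T_n$ are defined on spaces of growing dimension and need not form a projectively consistent family, so a priori their limit could fail to exist or could degenerate into a non-deterministic Kantorovich plan. Assumption {\bf A} is exactly what rules this out: the uniform Lipschitz bound provides the equicontinuity that forces subsequential limits to remain $K$-Lipschitz functions, while exchangeability together with (\ref{integr}) supplies the coordinatewise moment control needed for tightness. The most delicate technical point is the identification of the weak limit of the plans $\pi_n$ with the Monge coupling $(\mathrm{id},T)_{\#}\mu$, that is, the preservation of the graph structure in the limit; this is precisely where the a.e.\ convergence $T_n\to T$, rather than mere weak convergence of the plans, is indispensable.
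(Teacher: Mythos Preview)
The overall architecture of your argument (optimality of the limiting plan via projection, inheritance of exchangeability, identification of the pushforward) is sound and close to the paper's. The gap is in the step where you extract a limiting \emph{map} via Arzel\`a--Ascoli. The bound $|T_n(x)-T_n(y)|\le K|x-y|$ is in the Euclidean norm on $\mathbb{R}^n$, but each coordinate $T_n^{(i)}$ depends on all of $(x_1,\dots,x_n)$, and this dependence reaches further as $n$ grows. On a product-compact set in $\mathbb{R}^\infty$ two points can be arbitrarily close in the product topology while $|P_nx-P_ny|_{\ell^2}$ stays bounded away from zero for large $n$, since a basic neighbourhood constrains only finitely many coordinates. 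Hence the family $\{T_n^{(i)}\}_n$ is \emph{not} equicontinuous in the product topology, and Arzel\`a--Ascoli does not apply. A toy obstruction: $f_n(x)=n^{-1/2}\sum_{k\le n}x_k$ is $1$-Lipschitz on each $\mathbb{R}^n$ and bounded in $L^2(\mu)$ for any centred i.i.d.\ $\mu$ with unit variance, yet fails to converge $\mu$-a.e. Pointwise boundedness is equally problematic, since the $L^2$ bound only pins down $T_n^{(i)}$ at some point of $\mathbb{R}^n$ whose $\ell^2$-distance to a fixed $P_nx$ may diverge with $n$. So the uniform Lipschitz constant alone does not force the sequence of maps to have an a.e.\ limit.

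The paper avoids this by exploiting not the Lipschitz bound on $\nabla\Phi_n$ per se but its equivalent reformulation as uniform strong convexity of the dual potential $\Psi_n$. One first takes a weak $L^2(\mu)$ limit $T_i$ of $\partial_{x_i}\Phi_n$ (this is soft), and then upgrades to strong $L^2$ convergence by controlling the duality gap
$D_n=\int\bigl(\Phi_n(x)+\Psi_n(y)-\sum_{i\le n}x_iy_i\bigr)\,d\pi$
against the limiting optimal plan $\pi$. Exchangeability gives $D_n=n\int x_1y_1\,(d\pi_n-d\pi)$, hence $D_n/n\to0$; on the other hand the inequality $\Psi_n(a)-\Psi_n(b)-\langle\nabla\Psi_n(b),a-b\rangle\ge K^{-1}|a-b|^2$ with $a=\nabla\Phi_m(x)$, $b=\nabla\Phi_n(x)$ yields, after integrating and sending $m\to\infty$, the bound $KD_n\ge\int|P_nT-\nabla\Phi_n|^2\,d\mu$. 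Dividing by $n$ via exchangeability gives $\int(T_1-\partial_{x_1}\Phi_n)^2\,d\mu\le KD_n/n\to0$. This quantitative use of strong convexity of the dual, combined with the factor $1/n$ supplied by symmetry, is the missing idea.
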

\begin{proof}
Since the marginals of $\{\pi_n\}$ constitute tight sequences, the sequence $\{\pi_n\}$ of measures on $\mathbb{R}^{\infty} \times \mathbb{R}^{\infty}$ is tight itself. Hence  one can extract a weakly convergent subsequence
(denoted for brevity again by $\{\pi_n\}$) $\pi_n \to \pi$. 
Clearly, ${\pi}$ is exchangeable and has marginals $\mu, \nu$.
Let us show that $\pi$ is a solution  to the problem (\ref{ekp}). Indeed,
assuming the contrary, we get that there exists another exchangeable  measure
$\tilde{\pi}$ such that
$$
\int (x_1 -y_1)^2 d \tilde{\pi} < \int (x_1 - y_1)^2 d \pi.
$$ 
It follows from the weak convergence and (\ref{integr}) that
\begin{equation}
\label{costconv}
\int (x_1 - y_1)^2 d \pi =  \lim_n \int (x_1 -y_1)^2 d \pi_n.
\end{equation}
Hence $\int (x_1 - y_1)^2 d \tilde{\pi} <  \int  (x_1 -y_1)^2 d \pi_N $ for some $N$.
But this contradicts to optimality of $\pi_N$, because the projection of $\tilde{\pi}$ onto
$\mathbb{R}^N \times \mathbb{R}^N$ satisfies the constraints and gives a better value
to the Kantorovich functional.

 By the change of variables
$$ \int \partial_{x_i} \Phi_n^2 \ d \mu = \int \partial_{x_i} \Phi_n^2 \ d \mu_n  = \int y^2_i d \nu_n =  \int y^2_i d \nu < \infty$$
for every $i \le n$.
Let us pass to a subsequence of the sequence $\{\partial_{x_i} \Phi_n\}$
(denoted again by $\{\partial_{x_i} \Phi_n\}$). Applying the diagonal method
one can assume without loss of generality that
$$
\partial_{x_i} \Phi_n \to T_i
$$
weakly in $L^2(\mu)$ for every $i$.
We will show that $T = (T_1, T_2, \ldots, T_n, \ldots)$ is the desired mapping.
By standard measure-theoretical arguments it is sufficient to show that 
$
\partial_{x_i} \Phi_n \to T_i
$
in measure.

Consider the following quantity
$$
D_n = \int \bigl(\Phi_n(x) + \Psi_n(y) - \sum_{i=1}^n x_i y_i \bigr) d \pi,
$$
where $\Psi_n$ is the Legendre transform of $\Phi_n$ (the dual potential).
Since the integrand is nonnegative, one has $D \ge 0$.
Since  $\int \Phi_n d \pi = \int \Phi_n d \mu = \int \Phi_n d \mu_n = \int \Phi_n d \pi_n$, 
$\int \Psi_n d \pi = \int \Psi_n d \nu = \int \Psi_n d \nu_n = \int \Psi_n d \pi_n$,
and $\Phi_n + \Psi_n =  \sum_{i=1}^n x_i y_i $ $\pi_n$-almost everywhere, we get
$$
D_n =\int  \sum_{i=1}^n x_i y_i (d \pi_n - d \pi).
$$
Exchangeability of $\pi, \pi_n$ implies that  all the pairs $(x_i,y_i)$ are equally distributed. Hence
$$
D_n =n \int x_1 y_1 (d {\pi}_n - d \pi).
$$
We get, in particular, that 
\begin{equation}
\label{zeroconv}
\lim_{n \to\infty} \frac{D_n}{n}
=0.
\end{equation} 
This follows easily from the weak convergence $\pi_n \to \pi$ and (\ref{costconv}).

In the other hand
$$
D_n = \lim_m D_{n,m},$$
where
$$  D_{n,m} =  \int (\Phi_n(x) + \Psi_n(y) - \sum_{i=1}^n x_i y_i) d \pi_m
=  \int (\Phi_n(x) + \Psi_n(\nabla \Phi_m) - \sum_{i=1}^n x_i \partial_{x_i} \Phi_m) d \pi_m.
$$
Indeed, by the same arguments as above we show that $\int (\Phi_n(x) + \Psi_n(y)) d \pi_m = \int \Phi_n d \mu + \int \Psi_n d \nu$ ( $m  \ge n$)
and  $\int x_i y_i d \pi_m \to \int x_i y_i d \pi$ for every $i \le n$.

Taking into account the identity 
$$
\Phi_n(x) = - \Psi_n(\nabla \Phi_n) + \sum_{i=1}^n x_i \partial_{x_i} \Phi_n
$$
one obtains
$$
D_{n,m} = \int  \Psi_n(\nabla \Phi_m(x)) - \Psi_n(\nabla \Phi_n(x)) - \sum_{i=1}^n x_i (\partial_{x_i} \Phi_m - \partial_{x_i} \Phi_n) d \mu.
$$
Assumption {\bf A} implies
$$
D_{n,m} \ge \frac{1}{K} \int |Pr_n \nabla \Phi_m - \nabla \Phi_n|^2 d \mu.
$$
Passing to the limit $m \to \infty$ and applying the $L^2(\mu)$-weak convergence $\partial_{x_i} \Phi_m \to T_i$  one gets
by the well-known properties of the $L^2$-weak convergence
$$
K D_n \ge \int |Pr_n  T- \nabla \Phi_n|^2 d \mu.
$$
Since $Pr_n T$ and $\nabla \Phi_n$  commute with  permutations of the first $n$ coordinates, one gets
$$
 \frac{KD_n}{n} \ge  \int (T_1 - \partial_{x_1} \Phi_n)^2 d \mu. 
$$
Then (\ref{zeroconv}) implies $
\partial_{x_1} \Phi_n \to T_1
$
in measure. By the exchangeability  the same holds for every $x_i$: $\lim_n \partial_{x_i} \Phi_n = T_i$. The proof is complete.
\end{proof}

As an interesting  byproduct we get a characterization of the uniformly log-concave exchangeable measures.

We recall that a probability measure $\mu$  on $\mathbb{R}^n$ is called log-concave if it has the form $e^{-V} \cdot \mathcal{H}^{k}|_{L}$, where $\mathcal{H}^k$ is the $k$-dimensional Hausdorff
measure, $k \in \{0,1, \cdots, n\}$, $L$ is an affine subspace,  and $V$ is a convex function. 

In what follows we consider uniformly log-concave measures. Roughly speaking, these  are the measures with potential $V$
satisfying 
$$
V(x) - V(y) - \langle \nabla V(y), x-y \rangle \ge \frac{K}{2}|x-y|^2, \  \ \ K >0
$$ 
which is equivalent to $D^2 V \ge K \cdot \mbox{Id}$ in the smooth (finite-dimensional) case.

More precisely, we say that a probability measure $\mu$ is $K$-uniformly log-concave  ($K > 0$) if for any $\varepsilon>0$ the measure $\hat{\mu} = \frac{1}{Z} e^{\frac{K-\varepsilon}{2} |x|^2} \cdot \mu$ is  log-concave  for a suitable renormalization factor $Z$.
According to a classical result of C. Borell (\cite{Borell})  the projections  of log-concave measures are log-concave (this is in fact a corollary of the Brunn-Minkowski theorem). It can be easily checked that the uniform log-concavity is preserved by projections as well.
We can extend this notion to the infinite-dimensional case. Namely, we call a probability measure $\mu$ on a locally convex space $X$ log-concave ($K$-uniformly log-concave with $K>0$)  if its images  $\mu \circ l^{-1}$, $l \in X^*$ under linear continuous
functionals  are  all log-concave ($K$-uniformly log-concave with  $K>0$).

Another classical result we apply below is the famous  Cafarelli's contraction theorem. Here is the version from \cite{Kol2010}.
\begin{theorem} \label{Cafcon}{\bf (Caffarelli contraction theorem).}
Let $\nabla \Phi$ be the optimal transportation of the probability measure
$\mu = e^{-V} dx$ into $\nu = e^{-W} dx$. Assume that for some positive $c, C$ one has
$D^2 V \le C \cdot \rm{Id}$, $D^2 W \ge c \cdot \rm{Id}$. Then $\nabla \Phi$ is Lipschitz with 
$\| \nabla \Phi\|_{Lip} \le \sqrt{\frac{C}{c}}$.   
\end{theorem}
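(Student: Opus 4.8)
The plan is to prove the pointwise operator-norm bound $\|D^2\Phi\|_{op}\le\sqrt{C/c}$, which is equivalent to the asserted Lipschitz estimate since $\nabla\Phi$ is the gradient of a convex function and hence its Lipschitz constant equals $\sup_x\|D^2\Phi(x)\|_{op}$. Recall that the Brenier map $\nabla\Phi$ pushing $\mu=e^{-V}dx$ onto $\nu=e^{-W}dx$ satisfies the Monge--Amp\`ere equation $e^{-V(x)}=e^{-W(\nabla\Phi(x))}\det D^2\Phi(x)$, which after taking logarithms reads
$$
W(\nabla\Phi(x)) - V(x) = \log\det D^2\Phi(x).
$$
First I would invoke Caffarelli's interior regularity theory for the Monge--Amp\`ere equation to guarantee that, for smooth and strictly positive densities, $\Phi$ is regular enough ($C^3$ suffices) to differentiate this identity twice; in the general case one approximates $V,W$ by such data and passes to the limit in the final estimate.

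Next, fix a unit vector $e$ and set $u=\partial_{ee}\Phi=\langle D^2\Phi\,e,e\rangle$. Since $D^2\Phi$ is symmetric positive definite, its largest eigenvalue equals $\sup_{|e|=1}\sup_x u(x)$, so it suffices to bound $u$. Differentiating the displayed identity once in the direction $e$ and then once more, and writing $M=\partial_e D^2\Phi$, gives
$$
\langle D^2 W(\nabla\Phi)\,\partial_e\nabla\Phi,\partial_e\nabla\Phi\rangle + \langle\nabla W(\nabla\Phi),\nabla u\rangle - \partial_{ee}V = \mathrm{tr}\bigl((D^2\Phi)^{-1}D^2 u\bigr) - \mathrm{tr}\bigl((D^2\Phi)^{-1}M(D^2\Phi)^{-1}M\bigr).
$$
At a point where $u$ attains its maximum one has $\nabla u=0$ and $D^2 u\le 0$; since $(D^2\Phi)^{-1}$ is positive definite, the first trace on the right is $\le 0$, while the second trace is $\ge 0$ (it is the squared Frobenius norm of the symmetric matrix $(D^2\Phi)^{-1/2}M(D^2\Phi)^{-1/2}$). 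Hence at the maximum
$$
\langle D^2 W(\nabla\Phi)\,\partial_e\nabla\Phi,\partial_e\nabla\Phi\rangle \le \partial_{ee}V \le C.
$$
Writing $w=\partial_e\nabla\Phi=D^2\Phi\,e$ and combining $D^2 W\ge c\,\mathrm{Id}$ with the Cauchy--Schwarz bound $u=\langle w,e\rangle\le|w|$, this yields $c\,u^2\le c|w|^2\le C$, so $u\le\sqrt{C/c}$ at the maximum, and therefore everywhere. Taking the supremum over $e$ gives $\|D^2\Phi\|_{op}\le\sqrt{C/c}$.

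The algebra at the extremum is routine; the genuine obstacle is justifying that the maximum of $u$ is attained at an interior point rather than escaping to infinity, together with the regularity needed to differentiate twice. I would handle this exactly as in Caffarelli's original argument: either run the computation for the second boundary value problem over an exhausting family of bounded convex domains and control $u$ near the boundary by a barrier/comparison argument, or exploit the Gaussian-type decay forced by $D^2 V\le C$ and $D^2 W\ge c$ to show that $u$ cannot concentrate at infinity, so that a maximizing sequence stays in a compact set. Once attainment and the requisite smoothness are secured, the maximum-principle computation above closes the estimate, and the approximation argument transfers it to the stated hypotheses.
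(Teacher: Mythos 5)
The paper offers no proof of this statement at all: it is imported as a black box from the literature (Caffarelli's contraction theorem, in the quantitative form quoted from \cite{Kol2010}), and its only role here is to verify Assumption {\bf A} for log-concave targets. So the comparison is with the standard literature proof, and your reconstruction is essentially that proof: the logarithmic Monge--Amp\`ere identity, the two differentiations (your displayed identity is correct, including $Me=\nabla u$ where $M=\partial_e D^2\Phi$), the sign $\mathrm{tr}\bigl((D^2\Phi)^{-1}D^2u\bigr)\le 0$ at a maximum, the positivity of $\mathrm{tr}\bigl((D^2\Phi)^{-1}M(D^2\Phi)^{-1}M\bigr)$ as a squared Frobenius norm, and the closing step $c\,u^2\le c|w|^2\le\langle D^2W\,w,w\rangle\cdot\frac{|w|^2}{|w|^2}\le \partial_{ee}V\le C$ via Cauchy--Schwarz all check out and give the sharp constant $\sqrt{C/c}$.

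The thin spot is exactly the one you flag, but it deserves to be weighted as the heart of the proof rather than a technicality. In Caffarelli's actual argument one does not work with $\partial_{ee}\Phi$ and a maximizer at all: he uses second-order incremental quotients $\Phi(x+he)+\Phi(x-he)-2\Phi(x)$, both to lower the regularity demands and because the supremum over $x$ and $|e|=1$ is generally \emph{not} attained; the contradiction is extracted at a near-maximum by a perturbation device. Of your two proposed fixes, the first (second boundary value problem on exhausting convex domains plus a boundary barrier for $u$) is substantially harder than you suggest: the natural boundary condition $\nabla\Phi(\Omega)=\Omega'$ gives no evident pointwise control of $\partial_{ee}\Phi$ near $\partial\Omega$, and no simple barrier is known. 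The standard rigorous routes are Caffarelli's incremental-quotient argument itself, or approximation of the \emph{measures} (e.g., Gaussian damping or restriction to balls with renormalization) so that the supremum is attained, followed by stability of optimal maps --- which is close to your second suggestion. With that caveat made explicit, your proposal is a correct outline of the cited theorem's proof.
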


\begin{remark}
Clearly, Theorem \ref{Cafcon} provides a tool for verification of Assumption {\bf A}. The authors do not know other instruments to establish {\bf A} with comparable level of generality.
\end{remark}

\begin{remark}
As we already mentioned, the lower bound for the potential of measures is preserved under projections.
It is interesting that the upper bound for the potential 
\begin{equation}
\label{upbd2}
D^2 W \le K
\end{equation}
is  preserved under projections as well, i.e. all the projections of  $\nu = e^{-W} dx$ which satifies (\ref{upbd2})
have again the same property. For smooth potentials this can be checked by direct computations.
\end{remark}

\begin{theorem}
\label{exch-log-conc}
Every exchangeable uniformly log-concave measure $\nu$ is a countable power of a one-dimensional uniformly log-concave measure.
\end{theorem}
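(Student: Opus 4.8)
The plan is to run the existence machinery of Theorem \ref{permutation-exist} with the Gaussian measure as source and then to read off the structure of $\nu$ from the rigidity of exchangeable maps issuing from a countable power. Let $\gamma$ be the standard Gaussian on $\mathbb{R}$ and set $\mu = \gamma^{\infty}$ (the standard Gaussian on $\mathbb{R}^{\infty}$), which is exchangeable, taking $\nu$ as target. Since $\nu$ is $K$-uniformly log-concave, its one-dimensional marginal is $K$-uniformly log-concave on $\mathbb{R}$, hence either a Dirac mass --- in which case $\nu$ is trivially the countable power of that Dirac and the claim is immediate --- or absolutely continuous with finite second moment, so that the integrability condition (\ref{integr}) is in force.

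The decisive step is the verification of Assumption {\bf A}, uniformly in $n$, by means of the Caffarelli contraction theorem. The coordinate projections $\mu_n$ of the source are standard Gaussians on $\mathbb{R}^n$, i.e. $\mu_n = e^{-V}dx$ with $V = |x|^2/2$ and $D^2 V = \mathrm{Id}$, so the upper bound $C = 1$ holds in every dimension. By hypothesis and the preservation of the lower Hessian bound under projection (recorded in the preceding remarks) the projections satisfy $\nu_n = e^{-W_n}dx$ with $D^2 W_n \ge K \cdot \mathrm{Id}$, so that $c = K$ in every dimension. Theorem \ref{Cafcon} then shows that the optimal map $\nabla \Phi_n$ sending $\mu_n$ to $\nu_n$ is Lipschitz with constant $\le \sqrt{C/c} = 1/\sqrt{K}$, and this estimate is independent of $n$. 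Hence Assumption {\bf A} holds with a dimension-free constant.

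By Theorem \ref{permutation-exist} there is an exchangeable solution to the Monge problem, that is, an exchangeable Borel map $T$ with $\mu \circ T^{-1} = \nu$. I now invoke the diagonalization already exploited in the proof of Corollary \ref{not-exists}: because $\mu = \gamma^{\infty}$ is a countable power, the Hewitt--Savage $0$--$1$ law forces every exchangeable map to be diagonal $\mu$-almost everywhere. Indeed, for a permutation fixing the first index the relation $T \circ g = g \circ T$ makes $T_1$ invariant under permutations of $(x_2, x_3, \ldots)$, hence $\mu$-a.e. a function $t(x_1)$ of the first coordinate alone, and exchangeability propagates this to all coordinates, giving $T(x) = (t(x_1), t(x_2), \ldots)$. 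Since $\mu$ is a product and $T$ acts coordinatewise, the pushforward factorizes:
$$
\nu = \gamma^{\infty} \circ T^{-1} = \bigl( \gamma \circ t^{-1} \bigr)^{\infty} = m^{\infty}, \qquad m := \gamma \circ t^{-1}.
$$
Thus $\nu$ is a countable power, and its factor $m$, being the one-dimensional marginal of the uniformly log-concave $\nu$, is itself uniformly log-concave, which completes the argument.

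The main obstacle is the uniform-in-$n$ bound underlying Assumption {\bf A}: the whole scheme works precisely because a Gaussian source furnishes a dimension-free upper Hessian bound while the target retains a dimension-free lower bound under projection, which is exactly the configuration in which Caffarelli's theorem yields a Lipschitz constant independent of the dimension. Once existence of the exchangeable transport is granted, the passage from mere existence to the rigid structural conclusion is essentially automatic via diagonalization; the only points demanding care are the separate treatment of a degenerate one-dimensional marginal and the absolute continuity of the projections $\nu_n$ that legitimizes the application of Theorem \ref{Cafcon}.
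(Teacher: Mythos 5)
Your proof is correct and follows essentially the same route as the paper: Gaussian source $\gamma^{\infty}$, verification of Assumption \textbf{A} via the Caffarelli contraction theorem applied to the uniformly log-concave projections $\nu_n$, existence of the exchangeable transport from Theorem \ref{permutation-exist}, and the Hewitt--Savage diagonalization to conclude $\nu = (\gamma \circ t^{-1})^{\infty}$. The only difference is cosmetic: the paper concludes by citing Corollary \ref{not-exists} (whose proof contains exactly your $0$--$1$ law argument), whereas you re-derive the diagonalization explicitly and additionally flag the degenerate Dirac case and the integrability condition (\ref{integr}), which the paper leaves implicit.
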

\begin{proof}
Theorem \ref{permutation-exist} implies existence of an exchangeable transportation mapping $T$ pushing forward the standard Gaussian measure
$\gamma = \gamma^{\infty}, \gamma(dx) = \frac{1}{\sqrt{2 \pi}} e^{-\frac{x^2}{2}}$ onto $\nu$. Indeed, Assumption {\bf A}
follows from the Caffarelli
contraction theorem and the fact that the finite-dimensional projections of $\nu$ are uniformly log-concave.  The result follows from Corollary \ref{not-exists}.
\end{proof}

\begin{remark}
The assumption of uniform log-concavity in Theorem \ref{exch-log-conc} is important and can not be replaced  by the weaker assumption of  log-concavity. There exist log-concave exchangeable measures which are not product measures.
For example, let $m = e^{-V(x)} dx$  be a one-dimensional log-concave  probability measure.
The measure $$\tilde{\nu} = \prod_{i=1}^{\infty} e^{-V(x_i + t)} dx_i \cdot \frac{1}{\sqrt{2 \pi}} e^{-\frac{t^2}{2}}$$ is a log-concave measure on $\mathbb{R}^{\infty} \times \mathbb{R}$.
Its projection on $x$-coordinates is log-concave by the result of C.~Borell and exchangeable, but not a product measure. 
\end{remark}

\end{document}